\DeclareMathOperator{\LIM}{LIM}
\DeclareMathOperator{\card}{card}
\Crefname{enumi}{}{}
\Crefname{subsection}{Subsection}{Subsections}
\newtheorem{theorem}{Theorem}[section]
\newtheorem{lemma}[theorem]{Lemma}
\theoremstyle{definition}
\newtheorem{example}[theorem]{Example}
\newtheorem{problem}[theorem]{Problem}
\theoremstyle{remark}
\newtheorem{remark}[theorem]{Remark}
\numberwithin{equation}{section}
\def\section{\@startsection{section}{1}%
  \z@{.7\linespacing\@plus\linespacing}{.5\linespacing}%
  {\normalfont\bfseries\centering}}
\begin{document}

\setcounter{page}{1}

\title[Linear operators involved in functional equations]{Some class of linear operators involved in functional equations}

\author{Janusz Morawiec}

\author{Thomas Zürcher}

\email{janusz.morawiec@us.edu.pl \textnormal{(corresponding author);}\newline\hphantom{{{\itshape E-mail addresses}\/:}}\,thomas.zurcher@us.edu.pl}

\address{Institute of Mathematics, University of Silesia, Bankowa 14, PL-40-007 Katowice, Poland}

\date{15th of November 2018}
%

\subjclass[2010]{Primary 47A50; Secondary 26A24, 39B12, 47B38.}

\keywords{linear operators; approximate differentiability; Luzin's condition N; functional equations; integrable solutions}


\begin{abstract}
Fix $N\in\mathbb N$ and assume that for every $n\in\{1,\ldots, N\}$ the functions $f_n\colon[0,1]\to[0,1]$ and $g_n\colon[0,1]\to\mathbb R$ are Lebesgue measurable, $f_n$ is almost everywhere approximately differentiable with $|g_n(x)|<|f'_n(x)|$ for almost all $x\in [0,1]$,  there exists $K\in\mathbb N$ such that the set $\{x\in [0,1]:\card{f_n^{-1}(x)}>K\}$ is of Lebesgue measure zero, $f_n$ satisfy Luzin's condition~N, and the set $f_n^{-1}(A)$ is of Lebesgue measure zero for every set $A\subset\mathbb R$ of Lebesgue measure zero. We show that the formula $Ph=\sum_{n=1}^{N}g_n\!\cdot\!(h\circ f_n)$ defines a linear and continuous operator $P\colon L^1([0,1])\to L^1([0,1])$, and then we obtain results on the existence and uniqueness of solutions $\varphi\in L^1([0,1])$ of the equation $\varphi=P\varphi+g$ with a given $g\in L^1([0,1])$.
\end{abstract}
\maketitle

\section{Introduction}
Let $P_\phi\colon L^1([0,1])\to L^1([0,1])$ be the Frobenius--Perron operator corresponding to the $2$-adic transformation $\phi\colon[0,1]\to[0,1]$ given by $\phi(x)=2x (\hspace{-1.5ex}\mod 1)$. It is known (see e.g.\ \cite[Example~4.1.1]{LaMaBook}) that $P_\phi$ is of the form
\begin{equation}\label{P0}
P_\phi f(x)=\frac{1}{2}f\left(\frac{x}{2}\right)+\frac{1}{2}f\left(\frac{x+1}{2}\right).
\end{equation}
The following result by Kazimierz Nikodem about the operator $P_\phi$ is the main motivation to write this paper.
\begin{theorem}[{see \cite[Theorem~2]{N1991}}]\label{thmKN}
Assume $g\in L^1([0,1])$. Then the equation
\begin{equation}\label{e0}
\varphi=P_\phi\varphi+g
\end{equation}
has a solution in $L^1([0,1])$ if and only if the series $\sum_{k=0}^{\infty}P_\phi^{k}g$ is convergent in $L^1([0,1])$. Moreover, every solution $\varphi\in L^1([0,1])$ of equation \eqref{e0} is of the form $\varphi=\sum_{k=0}^{\infty}P_\phi^{k}g+c$, where $c$ is a real constant.
\end{theorem}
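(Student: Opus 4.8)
The plan is to treat equation \eqref{e0} as $(I-P_\phi)\varphi=g$ and to recognise that the formal Neumann-series candidate $\sum_{k=0}^{\infty}P_\phi^{k}g$ is precisely the object whose convergence is in question. I would rely on three elementary properties of the operator in \eqref{P0}: it is a contraction in the $L^1$ norm (hence continuous), it fixes the constant functions (since $\tfrac12c+\tfrac12c=c$), and it preserves the integral, $\int_0^1 P_\phi f=\int_0^1 f$. All three are immediate from \eqref{P0} by the substitutions $u=x/2$ and $u=(x+1)/2$.

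Sufficiency is the easy direction. Assuming $\varphi_0:=\sum_{k=0}^{\infty}P_\phi^{k}g$ converges in $L^1$, I would apply $P_\phi$ term by term, justified by continuity, to obtain $P_\phi\varphi_0=\sum_{k=1}^{\infty}P_\phi^{k}g=\varphi_0-g$, so that $\varphi_0$ solves \eqref{e0}. Since $P_\phi c=c$ for every real constant $c$, each function $\varphi_0+c$ is then also a solution; that these exhaust all solutions is the content of the ``moreover'' part, proved below.

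For necessity and the description of all solutions, the crucial observation is that the partial sums telescope. If $\varphi$ solves \eqref{e0}, then $g=\varphi-P_\phi\varphi$, whence $P_\phi^{k}g=P_\phi^{k}\varphi-P_\phi^{k+1}\varphi$ and
\[
\sum_{k=0}^{n}P_\phi^{k}g=\varphi-P_\phi^{n+1}\varphi .
\]
Thus convergence of the series is equivalent to convergence of $P_\phi^{n}\varphi$ in $L^1$. This is where the real work lies: I would establish the asymptotic-stability lemma that $P_\phi^{n}f\to\int_0^1 f\,dx$ in $L^1$ for every $f\in L^1([0,1])$. The proof uses the explicit iterate $P_\phi^{n}f(x)=2^{-n}\sum_{j=0}^{2^{n}-1}f\bigl((x+j)/2^{n}\bigr)$, which for continuous $f$ is a Riemann sum converging uniformly to $\int_0^1 f$ by uniform continuity; the general case then follows by approximating $f$ in $L^1$ by continuous functions and using the contraction property to control the error. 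Granting this lemma, $P_\phi^{n+1}\varphi\to\int_0^1\varphi=:c$, so the series converges and $\varphi=\sum_{k=0}^{\infty}P_\phi^{k}g+c$, settling both necessity and the ``moreover'' claim.

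The main obstacle is the asymptotic-stability lemma; it is the only point where the specific dynamics of the $2$-adic map, namely its exactness with respect to Lebesgue measure, enter the argument. Everything else is formal manipulation valid for any continuous operator that fixes the constants and preserves the integral, so I would isolate this lemma as the technical heart of the proof and verify the iterate formula carefully by induction before invoking the Riemann-sum estimate.
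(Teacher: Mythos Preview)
The paper does not contain a proof of this theorem: \cref{thmKN} is quoted from Nikodem's paper \cite{N1991} (and generalised in \cite{MZa}) purely as motivation, and the authors give no argument of their own for it. There is therefore nothing in the present paper to compare your proposal against.

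That said, your argument is sound. The telescoping identity $\sum_{k=0}^{n}P_\phi^{k}g=\varphi-P_\phi^{n+1}\varphi$ reduces both the necessity direction and the description of all solutions to the asymptotic-stability lemma $P_\phi^{n}f\to\int_0^1 f$ in $L^1$, and your sketch of that lemma is correct: the explicit iterate $P_\phi^{n}f(x)=2^{-n}\sum_{j=0}^{2^{n}-1}f\bigl((x+j)/2^{n}\bigr)$ is easily verified by induction, it is a genuine Riemann sum for continuous $f$ (each sample point $(x+j)/2^{n}$ lies in the subinterval $[j/2^{n},(j+1)/2^{n}]$), and the density-plus-contraction argument passes the limit to all of $L^1$. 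This is essentially the classical proof that the $2$-adic map is exact, which is the mechanism the authors allude to when they mention in the introduction that \cref{thmKN} was extended in \cite{MZa} to Frobenius--Perron operators of exact transformations.
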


In \cite{MZa} \cref{thmKN} was extended to the class of Frobenius--Perron operators that correspond to exact transformations and generalized to the class of ergodic Markov operators. In this paper we are interested in integrable solutions of equation \eqref{e0} in the case where the operator $P_\phi$ is replaced by a much more general one, not necessary by another Frobenius--Perron or Markov operator. Namely, we consider the following equation
\begin{equation}\label{e}
\varphi=P\varphi+g
\end{equation}
assuming that the operator $P$, acting on the space $L^1([0,1])$, is of the form
\begin{equation}\label{P}
Ph=\sum_{n=1}^{N}g_n\!\cdot\!(h\circ f_n)
\end{equation}
with functions $f_1,\ldots, f_N\colon[0,1]\to[0,1]$ and $g_1,\ldots, g_N\colon[0,1]\to\mathbb R$ satisfying a few extra conditions. To formulate all the extra conditions we need some definitions that we introduce in the next section. Now we only note that the extra conditions will exclude that formula \eqref{P} defines a Frobenius--Perron operator, and even a Markov operator (for details on Frobenius--Perron and Markov operators the reader can consult e.g.\ \cite[Chapters 6.2, 6.3 and 6.4]{LaMaBook}).

In this paper we are mainly interested in the case where $N\geq 2$. However, in the case where $N=1$ all results proved in the next sections hold true, because each summand on the right-hand side of \eqref{P} can be written in the form
\begin{equation*}
  g_{n,1}\!\cdot\!(h\circ f_n)+g_{n,2}\!\cdot\!(h\circ f_n)
\end{equation*}
with functions $g_{n,1},g_{n,2}\colon[0,1]\to\mathbb R$ having the same properties as the function $g_n$ and such that $g_{n,1}(x)+g_{n,2}(x)=g_n(x)$ for every $x\in[0,1]$. Let us note that equation \eqref{e} with $P$ of the form \eqref{P} and $N=1$ was widely discussed in \cite{KCG1990}; for its integrable solutions see e.g.\ Section~4.7 in this book.

The operator $P$ defined on $L^1([0,1])$ by formula \eqref{P} is linear, but it is not clear under which assumptions $P$ is continuous and sends functions from $L^1([0,1])$ into itself. Before we introduce assumptions guaranteeing that $P$ is continuous and $P\big(L^1([0,1])\big)\subset L^1([0,1])$, we note that a weak assumption involving $P$~and~$g$ implies the existence of an integrable solution of equation \eqref{e}.

\begin{remark}\label{solution of e}
Assume that $P\colon L^1([0,1])\to L^1([0,1])$ is a linear and continuous operator, and let $g\in L^1([0,1])$. If the series $\sum_{k=0}^{\infty}P^kg$ is convergent in $L^1([0,1])$, then
\begin{equation}\label{formula}
\varphi_0:=\sum_{k=0}^{\infty}P^kg
\end{equation}
is a solution of equation \eqref{e} belonging to the space $L^1([0,1])$.
\end{remark}

\begin{proof}
Define $\varphi_0$ by \eqref{formula} and observe that $\varphi_0\in L^1([0,1])$ by the assumption concerning the convergence. Then, making use of the linearity and continuity of~$P$, we obtain
\begin{equation*}
P\varphi_0+g=P\left(\sum_{k=0}^{\infty}P^kg\right)+g=\sum_{k=0}^{\infty}P^{k+1}g+g=\sum_{k=0}^{\infty}P^kg=\varphi_0.
\end{equation*}
The proof is complete.
\end{proof}

Throughout this paper we use the symbol $\varphi_0$ to represent the function defined by \eqref{formula}, which will be called the {\it elementary solution} of equation \eqref{e} provided that the series $\sum_{k=0}^{\infty}P^kg$ converges in $L^1([0,1])$. Thus we are interested in assumptions guaranteeing the convergence of the series $\sum_{k=0}^{\infty}P^kg$ for any $g\in L^1([0,1])$, or equivalently, guaranteeing that the elementary solution of equation \eqref{e} exists.

One of the simplest sufficient condition for the existence of the elementary solution of equation~\eqref{e} gives the following observation.

\begin{remark}\label{sufficient}
Assume that $P\colon L^1([0,1])\to L^1([0,1])$, and let $g\in L^1([0,1])$. If $P^mg=0$ for some $m\in\mathbb N$, then the elementary solution $\varphi_0$ of equation \eqref{e} exists and
$\varphi_0=\sum_{k=0}^{m-1}P^kg$.
\end{remark}

\begin{proof}
It suffices to note that the linearity of $P$ implies $P^kg=0$ for every $k\in\mathbb N$ such that $k>m$.
\end{proof}

The next example shows that the elementary solution of equation \eqref{e} can fail to exist, whereas an integrable solution can exist. Before we give the example, let us write that all concrete examples in this paper will concern equation \eqref{e} with~$P$ of the form \eqref{P} and $N=2$, i.e.\ the equation of the form
\begin{equation}\label{e-example}
\varphi(x)=g_1(x)\varphi(f_1(x))+g_2(x)\varphi(f_2(x))+g(x).
\end{equation}

\begin{example}\label{elementary solution1}
Fix a constant $c\in\mathbb R$ and let $g_1(x)=g_2(x)=-\frac{1}{2}$ and $g(x)=c$ for every $x\in[0,1]$. Then equation \eqref{e-example} takes the form
\begin{equation}\label{e1}
\varphi(x)=-\frac{1}{2}\varphi(f_1(x))-\frac{1}{2}\varphi(f_2(x))+c.
\end{equation}
It is clear that the constant function $\varphi=\frac{c}{2}$ satisfies \eqref{e1}, and since it belongs to the space $L^1([0,1])$, we conclude that equation \eqref{e1} has a solution in  $L^1([0,1])$. However, the elementary solution of this equation exists if and only if $c=0$, because in the case of equation \eqref{e1}, formula \eqref{P} yields $P^kg=(-1)^kc$ for every $k\in\mathbb N$ and hence $\sum_{k=0}^mP^kg=\frac{1}{2}(1+(-1)^m)c$ for every $m\in\mathbb N$.
\end{example}

If $c\neq0$, then the sequence  $(\sum_{k=0}^mP^kg)_{m\in\mathbb N}$ from \cref{elementary solution1} is not convergent in $L^1([0,1])$, but it is bounded. Thus we can calculate its pointwise classical Banach limit, that is $\LIM_{m\to\infty}(\sum_{k=0}^mP^kg)(x)=\frac{c}{2}$ for every $x\in[0,1]$. This suggests taking in \cref{solution of e} the pointwise classical Banach limit of the sequence $(\sum_{k=0}^mP^kg)_{m\in\mathbb N}$ instead of its limit. However, the problem is that there is no guarantee that the pointwise classical Banach limit of a bounded sequence of measurable (throughout this paper measurable always means Lebesgue measurable) functions is a measurable function (see \cite[page 288]{FKLWeiss1996}).

We finish this section with a remark about regularity of the elementary solution of equation \eqref{e}; talking about regularity properties of a function $h\in L^1([0,1])$, we mean that there exists a representative of $h$ having that regularity.

\begin{remark}\label{regularity1}
Assume that $P\colon L^1([0,1])\to L^1([0,1])$ is linear and continuous, \mbox{$g\in L^1([0,1])$}, and the elementary solution $\varphi_0$ of equation \eqref{e} exists.
\begin{enumerate}[label={\rm (\roman*)}]
\item If all the functions $f_1,\ldots,f_N,g_1,\ldots,g_N$ and $g$ are increasing (decreasing), then $\varphi_0$ is increasing (decreasing); in particular $\varphi_0$ is differentiable almost everywhere.
\item If all the functions $f_1,\ldots,f_N,g_1,\ldots,g_N$ and $g$ are continuous and if
\begin{equation}\label{Weierstrass}
\sum_{k=0}^{\infty}\|P^k g\|_\infty<\infty,
\end{equation}
then $\varphi_0$ is continuous.
\end{enumerate}
\end{remark}

\begin{proof}
(i) If $h\in L^1([0,1])$ is increasing (decreasing), then $Ph$ is as well. This implies that $P^kg$ is increasing (decreasing) for every $k\in\mathbb N$, and so is $\varphi_0$.

(ii) If $h\in L^1([0,1])$ is continuous, then $Ph$ is as well. This implies that $P^kg$ is continuous for every $k\in\mathbb N$. Condition \eqref{Weierstrass} guarantees the continuity of $\varphi_0$.
\end{proof}


\section{Preliminaries}
Let $E\subset\mathbb R$ be a nonempty set, let $x_0\in E$, and let $h\colon E\to\mathbb R$ be a function. We say that a linear mapping $L\colon\mathbb R\to\mathbb R$ is an \emph{approximate differential} of~$h$ at~$x_0$ if for every $\varepsilon>0$ the set
\begin{equation*}
\left\{x\in E\setminus\{x_0\}: \frac{|h(x)-h(x_0)-L(x-x_0)|}{|x-x_0|}<\varepsilon\right\}
\end{equation*}
has $x_0$ as a density point (see \cite{W1951}, cf. \cite[Chapter IX]{S1964}). We say that $h$ is \emph{approximately differentiable} at $x_0$ if the approximate differential of $h$ at $x_0$ exists.

We begin with the following easy observation, which also shows that the approximate differential (if it exists) is uniquely determined.

\begin{lemma}\label{Approx diff equal ae}
Assume that $E\subset\mathbb R$ is a nonempty set and $h_1,h_2\colon E\to\mathbb R$ are functions such that $h_1=h_2$ almost everywhere. If $h_1$ is approximately differentiable almost everywhere, then $h_2$ is as well. Moreover, whenever $h_1$~or~$h_2$ is approximately differentiable at a point, the other function is as well, and the approximate derivatives agree at this point.
\end{lemma}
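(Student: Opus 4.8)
The plan is to isolate the only place where $h_1$ and $h_2$ differ and to exploit the fact that the notion of a density point is insensitive to modifications by sets of measure zero. I would set $Z=\{x\in E:h_1(x)\neq h_2(x)\}$, which is a null set by hypothesis, and first record the elementary principle on which everything rests: if two subsets $A,B$ of $E$ satisfy $|A\triangle B|=0$, then $|A\cap I|=|B\cap I|$ for every interval $I$, and hence a point $x_0$ is a density point of $A$ if and only if it is a density point of $B$. This is immediate from the definition of a density point through Lebesgue measure.

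The heart of the argument is the transfer of approximate differentiability at a single point. I would fix $x_0\in E\setminus Z$, so that $h_1(x_0)=h_2(x_0)$, and suppose that $L$ is an approximate differential of $h_1$ at $x_0$. For $\varepsilon>0$ write
\[
S_i^\varepsilon=\left\{x\in E\setminus\{x_0\}:\frac{|h_i(x)-h_i(x_0)-L(x-x_0)|}{|x-x_0|}<\varepsilon\right\},\qquad i=1,2.
\]
Since $h_1(x_0)=h_2(x_0)$ and $h_1(x)=h_2(x)$ for every $x\in E\setminus Z$, the memberships in $S_1^\varepsilon$ and $S_2^\varepsilon$ coincide off $Z$; that is, $S_1^\varepsilon\triangle S_2^\varepsilon\subseteq Z$ is null. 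By the principle above, $x_0$ is a density point of $S_2^\varepsilon$ because it is one of $S_1^\varepsilon$. As $\varepsilon>0$ was arbitrary, $L$ is an approximate differential of $h_2$ at $x_0$. The roles of $h_1$ and $h_2$ being symmetric, approximate differentiability at any point of $E\setminus Z$ transfers in both directions with the same differential, and since $Z$ is null, approximate differentiability of $h_1$ almost everywhere yields the same for $h_2$, giving the first assertion.

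For the uniqueness of the approximate differential (which is what makes the equality of approximate derivatives in the ``moreover'' part meaningful), I would suppose that $L$ and $\widetilde L$ are both approximate differentials of a single function $h$ at $x_0$. Their two associated test sets each have $x_0$ as a density point, and the intersection of two such sets again has $x_0$ as a density point, hence meets every neighborhood of $x_0$. On that intersection the triangle inequality gives $|(L-\widetilde L)(x-x_0)|/|x-x_0|<2\varepsilon$; the left-hand side is independent of $x$ and equals the modulus of the coefficient of $L-\widetilde L$, so this coefficient is bounded by $2\varepsilon$ for every $\varepsilon>0$ and therefore vanishes, i.e.\ $L=\widetilde L$. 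Combined with the transfer argument, this delivers the agreement of the approximate derivatives.

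The step I expect to be delicate is the treatment of the value at the base point $x_0$. The test sets $S_1^\varepsilon$ and $S_2^\varepsilon$ agree off $Z$ precisely because the constants $h_1(x_0)$ and $h_2(x_0)$ coincide there; if $x_0\in Z$, the mismatch $h_1(x_0)\neq h_2(x_0)$ keeps the numerator bounded away from $0$ near $x_0$, so approximate differentiability need not pass from one function to the other at such a point. This is exactly why the clean pointwise transfer is anchored at points of $E\setminus Z$, a set of full measure, and it is this restriction to a conull set that makes the almost-everywhere conclusion go through.
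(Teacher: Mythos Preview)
Your argument is correct and follows the same route as the paper: restrict to points $x_0$ where $h_1(x_0)=h_2(x_0)$, observe that the two test sets differ only on the null set $Z$, and conclude that density at $x_0$ transfers. You add an explicit uniqueness argument for the approximate differential (which the paper asserts but does not spell out) and a careful remark on why the pointwise transfer is confined to $E\setminus Z$; both additions are sound and match the paper's implicit ``this is true for almost all $x_0$'' caveat.
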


\begin{proof}
Assume that $h_1(x_0)=h_2(x_0)$ and the function~$h_1$ is approximately differentiable at~\mbox{$x_0\in E$}; this is true for almost all $x_0\in E$. Let $L$ be the approximate differential of $h_1$ at $x_0$. Then for every $\varepsilon>0$, the sets
\begin{align*}
\left\{x\in E\setminus\{x_0\}: \frac{|h_1(x)-h_1(x_0)-L(x-x_0)|}{|x-x_0|}<\varepsilon\right\}
\end{align*}
and
\begin{align*}
\left\{x\in E\setminus\{x_0\}: \frac{|h_2(x)-h_2(x_0)-L(x-x_0)|}{|x-x_0|}<\varepsilon\right\}
\end{align*}
have the same measure.
Hence $h_2$ is approximately differentiable at~$x_0$ with $L$ being its approximate differential at $x_0$.
\end{proof}

To simplify notation, we will denote the approximate differential of a function $h\colon E\to\mathbb R$ at~$x_0$ by $h'(x_0)$. Moreover, if a function $h\colon E\to\mathbb R$ is almost everywhere approximately differentiable, then as usual we denote by $h'$ the function $E\ni x\mapsto h'(x)$, adopting the convention that $h'(x)=0$ for every point $x\in E$ in which $h$ is not approximately differentiable.

Assume that $U\subset \mathbb{R}$ is an open set. We say that $f\colon U \to \mathbb{R}$ satisfies \emph{Luzin's condition~N} if it maps sets of measure zero to sets of measure zero. If $f\colon U \to\mathbb{R}$ and if $E\subset U$, then the function $N_f(\cdot,E)\colon\mathbb{R}\to\mathbb{N}\cup \{\infty\}$ defined by
\begin{equation*}
N_f(y,E)=\card(f^{-1}(y)\cap E)
\end{equation*}
is called the \emph{Banach indicatrix} of~$f$.

\begin{lemma}\label{approximate derivative is measurable}
Assume that $U\subset \mathbb{R}$ is an open set and let $h\colon U\to\mathbb{R}$ be a measurable function satisfying Luzin's condition~N and being approximately differentiable almost everywhere. Then the approximate derivative $h'\colon U\to\mathbb R$ is a measurable function.
\end{lemma}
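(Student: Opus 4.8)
The plan is to reduce the measurability of the approximate derivative to the measurability of ordinary derivatives of smooth functions. The naive idea — writing $h'$ as a pointwise limit of difference quotients — fails, because $h'(x_0)$ is only the \emph{approximate} limit of the quotients $(h(x)-h(x_0))/(x-x_0)$, not a genuine one, so measurability cannot be read off from the measurable maps $x\mapsto(h(x+t)-h(x))/t$ as $t\to0$. This is the essential difficulty, and I would resolve it with a Lusin-type decomposition.

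First I would invoke the classical Whitney/Lusin approximation (cf.\ \cite{W1951}): for every $k\in\mathbb N$ there exist a closed set $F_k\subset U$ with $|U\setminus F_k|<1/k$ and a function $\psi_k\in C^1(\mathbb R)$ with $h=\psi_k$ on $F_k$. Setting $E_k=F_k\setminus\bigcup_{j<k}F_j$, the sets $E_k$ are measurable and pairwise disjoint, they cover $U$ up to the null set $Z=U\setminus\bigcup_kF_k$, and $h=\psi_k$ on $E_k$.

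The hard part will be to show that $h'=\psi_k'$ almost everywhere on $E_k$. Fix $k$ and take $x_0\in E_k$ that is a density point of $F_k$ and at which $h$ is approximately differentiable; by the Lebesgue density theorem and the hypothesis, almost every point of $E_k$ qualifies. Since $\psi_k$ is differentiable at $x_0$ and $h=\psi_k$ on $F_k$ (in particular $h(x_0)=\psi_k(x_0)$), for each $\varepsilon>0$ the set
\begin{equation*}
\left\{x\in U\setminus\{x_0\}:\frac{|h(x)-h(x_0)-\psi_k'(x_0)(x-x_0)|}{|x-x_0|}<\varepsilon\right\}
\end{equation*}
contains the intersection of $F_k$ with the punctured neighborhood on which the analogous inequality for $\psi_k$ holds; being an intersection of two sets having $x_0$ as a density point, this forces $x_0$ to be a density point of the displayed set. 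Hence $\psi_k'(x_0)$ is an approximate differential of $h$ at $x_0$, and by the uniqueness recorded in \cref{Approx diff equal ae} we conclude $h'(x_0)=\psi_k'(x_0)$.

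Finally, the function equal to $\psi_k'$ on $E_k$ for each $k$ and to $0$ on $Z$ is measurable, being a countable patching of continuous functions over measurable sets; it coincides with $h'$ almost everywhere, since $h'=0$ on the measure-zero set where $h$ fails to be approximately differentiable, by our convention. Completeness of Lebesgue measure then gives the measurability of $h'$. I would remark that Luzin's condition~N does not seem to be strictly necessary for this particular conclusion, even though it is a standing hypothesis used later; were one to argue measurability instead directly through an area/change-of-variables formula for $h$, condition~N would be the point at which that route uses the hypothesis.
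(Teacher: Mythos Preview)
Your argument is correct and follows essentially the same route as the paper: decompose $U$, up to a null set, into countably many measurable pieces on which $h'$ agrees almost everywhere with the derivative of a $C^1$ function, and patch. The only difference is one of packaging: the paper invokes \cite[Theorem~1]{Hajlasz} (cf.\ \cite[Theorem~1]{W1951}) in a form that already delivers $h'=h_k'$ a.e.\ on each $X_k$, whereas you cite the version giving only $h=\psi_k$ on $F_k$ and then supply the density-point argument yourself to upgrade agreement of values to agreement of approximate derivatives---a pleasant and self-contained addition. Your closing remark is also correct: Luzin's condition~N plays no role in this lemma and is only used later, in the change-of-variables machinery.
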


\begin{proof}
According to \cite[Theorem~1]{Hajlasz} (see also \cite[Theorem~1]{W1951}), there exist sequences $(X_k)_{k\in\mathbb N}$ of subsets of $U$ and $(h_k)_{k\in\mathbb N}$ of functions in $C^1(\mathbb{R})$ such that $X_k\subset X_{k+1}$ and $h'=h_k'$ almost everywhere in $X_k$ for every $k\in \mathbb{N}$, and moreover, the set $E=U\setminus \bigcup_{k=1}^\infty X_k$ is of measure zero. For every $k\in\mathbb N$ put
\begin{equation*}
Y_k=\{x\in X_k:h'(x)=h_k'(x)\}
\end{equation*}
and note that the set $X_k\setminus Y_k$ is of measure zero. Then for every open set $O\subset \mathbb{R}$ we have
\begin{align*}
{h'}^{-1}(O)=\,&\{x\in E: h'(x)\in O\}\cup\bigcup_{k=1}^\infty\{x\in X_k\setminus Y_k: h'(x)\in O\}\\
&\cup\bigcup_{k=1}^\infty {\{x\in Y_k: h_k'(x)\in O\}}.
\end{align*}
The first two sets are measurable as subsets of sets of measure zero. Since all $Y_k$~and~$h_k'$ are measurable, the third set is also measurable.
\end{proof}

The following change of variable theorem from \cite{Hajlasz} will be a useful tool in this paper.

\begin{theorem}[see {\cite[Theorem~2]{Hajlasz}}]\label{Hajlasz}
Assume that $U\subset \mathbb{R}$ is an open set and let $f\colon U\to\mathbb{R}$ be a measurable function satisfying Luzin's condition~N and being almost everywhere approximately differentiable.
If $h\colon \mathbb{R}\to \mathbb{R}$ is a measurable function, then for every measurable set $E\subset U$ the following statements are true:
\begin{enumerate}[label={\rm (\roman*)}]
\item\label{Hajlasz Measurable} The functions $(h\circ f)|f'|$ and $h N_f(\cdot,E)$ are measurable;
\item\label{Hajlasz Formula} If $h\geq 0$, then
\begin{equation}\label{change}
\int_E (h\circ f)(x)|f'(x)|dx=\int_{\mathbb{R}} h(y)N_f(y,E)dy;
\end{equation}
\item\label{Hajlasz Integrability} If one of the functions $(h\circ f)|f'|$ and $h N_f(\cdot,E)$ is integrable $($for $(h\circ f)|f'|$ integrability is considered with respect to~$E$$)$, then so is the other and \eqref{change} holds.
\end{enumerate}
\end{theorem}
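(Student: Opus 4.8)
The plan is to reduce everything to the classical one-dimensional area formula for $C^1$ functions by means of the Whitney-type decomposition already exploited in the proof of \cref{approximate derivative is measurable}. Recall from \cite[Theorem~1]{Hajlasz} that there exist sets $X_k\subset U$ with $X_k\subset X_{k+1}$ and functions $f_k\in C^1(\mathbb R)$ such that $f=f_k$ and $f'=f_k'$ almost everywhere on $X_k$, while $U\setminus\bigcup_{k=1}^\infty X_k$ is of measure zero. Intersecting each $X_k$ with the measurable sets $\{f=f_k\}$ and $\{f'=f_k'\}$ alters it only by a null set, so I may assume these equalities hold on $X_k$ itself; disjointifying via $A_1=X_1$ and $A_k=X_k\setminus X_{k-1}$ for $k\geq 2$ then yields pairwise disjoint measurable sets on which $f$ coincides \emph{pointwise} with the $C^1$ function $f_k$, whose union is $U$ up to a null set $E_0$.

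For \ref{Hajlasz Measurable}, the measurability of $f'$ is exactly \cref{approximate derivative is measurable}. The delicate point is $(h\circ f)|f'|$, because a composition of a measurable function with a merely measurable (or even continuous) function need not be measurable. I would split according to $f'$: on $\{f'=0\}$ the product vanishes, while on $A_k\cap\{f'\neq0\}$ we have $f=f_k$ with $f_k'\neq0$, so by the inverse function theorem $f_k$ is locally a $C^1$-diffeomorphism and hence pulls measurable sets back to measurable sets; thus $h\circ f=h\circ f_k$ is measurable there, and a countable union settles $(h\circ f)|f'|$. For $hN_f(\cdot,E)$ it suffices to show $N_f(\cdot,E)$ is measurable. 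Writing $N_f(\cdot,E)=N_f(\cdot,E_0)+\sum_k N_f(\cdot,E\cap A_k)$ and noting $N_f(\cdot,E\cap A_k)=N_{f_k}(\cdot,E\cap A_k)$ (equality of $f$ and $f_k$ on $A_k$ is pointwise), each term is measurable: decomposing $\{f_k'\neq0\}$ into intervals of strict monotonicity reduces the indicatrix to an indicator of a measurable image, and on $\{f_k'=0\}$ it is supported on the null set $f_k(\{f_k'=0\})$, hence vanishes almost everywhere.

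For \ref{Hajlasz Formula}, I apply the classical area formula for the $C^1$ function $f_k$ and the measurable set $E\cap A_k$, namely
\[
\int_{E\cap A_k}(h\circ f_k)(x)|f_k'(x)|\,dx=\int_{\mathbb R}h(y)\,N_{f_k}(y,E\cap A_k)\,dy,
\]
valid for nonnegative measurable $h$. Since $f=f_k$ and $f'=f_k'$ on $A_k$ and $N_{f_k}(\cdot,E\cap A_k)=N_f(\cdot,E\cap A_k)$, the two sides become $\int_{E\cap A_k}(h\circ f)|f'|$ and $\int_{\mathbb R}h\,N_f(\cdot,E\cap A_k)$. Summing over $k$ by monotone convergence and observing that the leftover set $E_0$ contributes nothing—its measure is zero on the left, and $N_f(\cdot,E_0)=0$ almost everywhere on the right because Luzin's condition~N forces $f(E_0)$ to be null—yields \eqref{change}. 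Part \ref{Hajlasz Integrability} then follows by applying \ref{Hajlasz Formula} to $|h|$: the function $(h\circ f)|f'|$ is integrable over $E$ if and only if $\int_E(|h|\circ f)|f'|<\infty$, if and only if $\int_{\mathbb R}|h|N_f(\cdot,E)<\infty$, i.e.\ $hN_f(\cdot,E)$ is integrable; splitting a signed $h$ into $h^+$ and $h^-$ and subtracting the two nonnegative identities gives \eqref{change} in the integrable case.

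The step I expect to be the main obstacle is the measurability in \ref{Hajlasz Measurable}: guaranteeing that $h\circ f$ is measurable is genuinely false for an arbitrary measurable $f$, and is rescued only by the interplay between the $C^1$-pieces, where $f$ is locally a diffeomorphism on $\{f'\neq0\}$, and the vanishing of the factor $|f'|$ on the remaining set. Arranging the decomposition so that the Banach indicatrix of $f$ agrees \emph{exactly}, not merely almost everywhere, with that of each $C^1$ piece on $A_k$ is precisely what lets the summation in \ref{Hajlasz Formula} proceed without spurious null-set corrections.
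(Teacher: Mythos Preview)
The paper does not prove this theorem at all: it is quoted verbatim from \cite[Theorem~2]{Hajlasz} and used as a black box. There is therefore no ``paper's own proof'' to compare against.

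That said, your sketch is the standard route and is essentially how the cited result is established: reduce to $C^1$ pieces via the Whitney--Federer decomposition (\cite[Theorem~1]{Hajlasz}), invoke the classical area formula on each piece, and use Luzin's condition~N together with Sard's theorem to control the exceptional null sets. Your identification of the only genuinely delicate point---that $h\circ f$ need not be measurable for measurable $f$, and is rescued here by the factor $|f'|$ killing the bad set $\{f'=0\}$ while the $C^1$ structure handles $\{f'\neq 0\}$---is exactly right. One small slip: in the decomposition of the indicatrix you wrote $N_f(\cdot,E_0)$ where you need $N_f(\cdot,E\cap E_0)$, but since $E\cap E_0\subset E_0$ is still null and $f$ satisfies condition~N, the argument goes through unchanged.
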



\section{Main results}
From now on we fix $N\in\mathbb N$, measurable functions $f_1,\ldots,f_N\colon [0,1]\to [0,1]$ and  $g_1,\ldots, g_N\colon [0,1]\to\mathbb R$, and a function $g\in L^1([0,1])$.

For all $m\in\{1,\ldots,N\}$ and pairwise different $n_1,\ldots,n_m\in\{1,\ldots,N\}$
we put
\begin{equation*}
A_{n_1,\ldots,n_m}=\bigcap_{i=1}^mf_{n_i}([0,1])
\end{equation*}
and denote its measure by $l_{n_1,\ldots,n_m}$. Then we put
\begin{equation*}
L=\max\{m\in\{1,\ldots,N\}:l_{n_1,\ldots,n_m}>0\hbox{ for some }n_1<n_2<\cdots<n_m\}
\end{equation*}
and observe that
\begin{eqnarray}\label{L}
\sum_{n=1}^{N}\int_{f_n([0,1])}|h(y)|dy\leq L\int_{[0,1]}|h(y)|dy
\quad\hbox{ for every }h\in L^1([0,1]).
\end{eqnarray}

To formulate our results, we need the following hypotheses.
\begin{enumerate}[label=(H$_{\arabic*}$)]
\item\label{general assumption} For every $n\in\{1,\ldots, N\}$ the function $f_n$ is almost everywhere approximately differentiable and satisfy Luzin's condition~N.
\item\label{vanishing cardinality} There exists $K\in\mathbb N$ such that for every $n\in\{1,\ldots, N\}$ the set $\{x\in [0,1]:\card{f_n^{-1}(x)}>K\}$ is of measure zero.
\item\label{inverse of Luzin} The set $f_n^{-1}(A)$ is of measure zero for all $n\in\{1,\ldots, N\}$ and sets $A\subset\mathbb R$ of measure zero.
\end{enumerate}

Throughout the paper, $\|\cdot\|$ denotes the classical norm in $L^1([0,1])$ and $\|\cdot\|_*$ denotes the standard norm in the space of all linear and continuous operators from $L^1([0,1])$ into itself.

\begin{lemma}\label{lemP}
Assume {\rm \cref{general assumption}}, {\rm \cref{vanishing cardinality}}, and {\rm \cref{inverse of Luzin}}. If there exists a real constant $C$ such that
\begin{equation}\label{C}
|g_n(x)|\leq \frac{C}{KL}|f_n'(x)|\quad\hbox{for all }n\in\{1,\ldots,N\}\hbox{ and almost all }x\in [0,1],
\end{equation}
then formula \eqref{P} defines a linear operator $P\colon L^1([0,1])\to L^1([0,1])$, which is continuous satisfying
\begin{equation}\label{normP}
\|P\|_*\leq C.
\end{equation}
\end{lemma}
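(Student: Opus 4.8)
The plan is to verify three things in turn: that $P$ is well defined on equivalence classes and produces measurable functions, that it is linear, and that it satisfies the norm bound $\|P\|_*\le C$, from which continuity is automatic. Linearity is immediate from the linearity of composition and pointwise multiplication, so the real work lies in the remaining two points, both of which rest on the change of variable theorem (\cref{Hajlasz}) combined with the counting hypotheses.

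First I would settle measurability and well-definedness, and here \cref{inverse of Luzin} is the crucial ingredient. Given $h\in L^1([0,1])$, extend it by zero to a measurable function on $\mathbb R$; since each $f_n$ maps into $[0,1]$, this does not affect $h\circ f_n$. Choosing a Borel representative $\tilde h$ of $h$, the composition $\tilde h\circ f_n$ is measurable, and it agrees with $h\circ f_n$ off the set $f_n^{-1}(\{\tilde h\neq h\})$, which is null by \cref{inverse of Luzin}. Hence $h\circ f_n$ is measurable, so each summand $g_n\cdot(h\circ f_n)$, and therefore $Ph$, is measurable. The same argument shows that replacing $h$ by an almost everywhere equal function changes $Ph$ only on a null set, so $P$ is well defined on $L^1([0,1])$.

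The heart of the proof is the norm estimate. Starting from
\[
\|Ph\|\le\sum_{n=1}^{N}\int_0^1|g_n(x)|\,|h(f_n(x))|\,dx
\]
and inserting the bound \eqref{C}, the right-hand side is at most $\frac{C}{KL}\sum_{n=1}^{N}\int_0^1|h(f_n(x))|\,|f_n'(x)|\,dx$. To each integral I would apply the change of variable formula \eqref{change} (with $E=(0,1)$ and the nonnegative function $|h|$), which is legitimate because \cref{general assumption} guarantees that each $f_n$ is approximately differentiable almost everywhere and satisfies Luzin's condition~N. This rewrites the integral as $\int_{\mathbb R}|h(y)|\,N_{f_n}(y,(0,1))\,dy$. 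Now \cref{vanishing cardinality} bounds the Banach indicatrix by $K$ for almost every $y$, while $N_{f_n}(y,(0,1))$ vanishes outside $f_n([0,1])$; hence this term is at most $K\int_{f_n([0,1])}|h(y)|\,dy$. Summing over $n$ and invoking inequality \eqref{L} yields
\[
\|Ph\|\le\frac{C}{KL}\cdot K\cdot L\,\|h\|=C\,\|h\|,
\]
which simultaneously shows $Ph\in L^1([0,1])$ and establishes \eqref{normP}.

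The main obstacle, and the place where care is needed, is the interface between the abstract theorem and the concrete setting: \cref{Hajlasz} is stated for functions on an open set and for an $h$ defined on all of $\mathbb R$, whereas here $f_n$ lives on $[0,1]$ and $h$ on $[0,1]$. I would handle this by working on the open interval $(0,1)$ (discarding the null set $\{0,1\}$) and extending $h$ by zero, while keeping careful track of the identification $N_{f_n}(y,(0,1))=\card(f_n^{-1}(y)\cap(0,1))$ so that \cref{vanishing cardinality} applies to it. A secondary point to check is that the pointwise bound \eqref{C} is meaningful, that is, that $f_n'$ is measurable; this is exactly the content of \cref{approximate derivative is measurable} under \cref{general assumption}.
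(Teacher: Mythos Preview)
Your proof is correct and follows the same overall strategy as the paper: well-definedness via \cref{inverse of Luzin}, then the norm bound via the change of variable formula \eqref{change} combined with the indicatrix bound from \cref{vanishing cardinality} and inequality \eqref{L}. The norm estimate is carried out identically.

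There is one genuine, if minor, difference worth noting. You obtain measurability of $h\circ f_n$ directly by passing to a Borel representative $\tilde h$ and invoking \cref{inverse of Luzin} to control $f_n^{-1}(\{h\neq\tilde h\})$; this is clean and elementary. The paper instead never asserts that $h\circ f_n$ itself is measurable: it appeals to \cref{Hajlasz}\cref{Hajlasz Measurable} to get measurability of $(h\circ f_n)|f_n'|$, and then writes
\[
g_n(x)(h\circ f_n)(x)=
\begin{cases}
g_n(x)\dfrac{(h\circ f_n)(x)|f_n'(x)|}{|f_n'(x)|}, & f_n'(x)\neq 0,\\[1ex]
0, & f_n'(x)=0,
\end{cases}
\]
using \eqref{C} to handle the set $\{f_n'=0\}$ and \cref{approximate derivative is measurable} for the measurability of $f_n'$. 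Your route avoids this division trick; the paper's route avoids the Borel-representative step. On the technical side you restrict to the open interval $(0,1)$, while the paper extends $f_n$ by zero to all of $\mathbb R$; both are legitimate ways to fit the setting into \cref{Hajlasz}.
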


\begin{proof}
First, note that $P$ does not depend on the representatives by \cref{inverse of Luzin}. Moreover, the linearity of $P$ is evident.
	
Fix $h\in L^1([0,1])$.
	
Our first aim is to prove that $Ph\in L^1([0,1])$. Obviously, it is enough to show that $g_n\!\cdot\!(h\circ f_n)\in L^1([0,1])$ for every $n\in\{1,\ldots,N\}$.
	
Fix $n\in\{1,\ldots, N\}$ and extend $f_n$ and $h$ to the whole real line~$\mathbb{R}$ by letting them being~$0$ outside of the interval $[0,1]$; we will denote both the extensions by the same symbols $f_n$ and $h$, respectively.
	
Assumption~\cref{general assumption} allows us to use \cref{Hajlasz}. Then, applying assertion~\cref{Hajlasz Measurable} of~\cref{Hajlasz}, we see that the function $hN_{f_n}(\cdot,[0,1])$ is measurable. This function is integrable by \cref{vanishing cardinality}. Thus, making use of assertion \cref{Hajlasz Integrability} of \cref{Hajlasz}, we conclude that the function $(h\circ f_n)|f_n'|$ is integrable; in particular, it is measurable. Using~\eqref{C}, we obtain
\begin{equation*}
g_n(x)(h\circ f_n)(x)=\begin{cases}
g_n(x)\frac{(h\circ f_n)(x)|f_n'(x)|}{|f_n'(x)|}, & \mbox{if $f_n'(x)\not=0$},  \\
0, &  \mbox{if $f_n'(x)=0$}
\end{cases}
\end{equation*}
for almost all $x\in[0,1]$. This equality jointly with \cref{approximate derivative is measurable} and the measurability of $g_n$ and $(h\circ f_n)|f_n'|$ implies that the function $g_n\!\cdot\! (h\circ f_n)$ is measurable. Finally, by~\eqref{C} and the integrability of $(h\circ f_n)|f_n'|$, we conclude that $g_n\!\cdot\! (h\circ f_n)\in L^1([0,1])$.
	
Our second aim, which will complete the proof, is to show that $\|P\|_*\leq C$.
	
Applying~\eqref{C}, we get
\begin{align*}
\|Ph\|&=\int_{[0,1]}\left|\sum_{n=1}^{N}g_n(x)(h\circ f_n)(x)\right|dx
\leq\sum_{n=1}^{N}\int_{[0,1]}|g_n(x)(h\circ f_n)(x)|dx\\
&\leq\frac{C}{KL}\sum_{n=1}^{N}\int_{[0,1]}|f_n'(x)(h\circ f_n)(x)|dx.
\end{align*}
Note that for all $n\in\{1,\ldots, N\}$ and $y\in\mathbb R$ we have
\begin{equation*}
0<N_{f_n}(y,[0,1])\iff y\in f_n([0,1]).
\end{equation*}
Now extending all the considered functions to the whole real line~$\mathbb{R}$ by letting them being~$0$ outside of the interval $[0,1]$ (as in the first part of the proof) and then applying formula \eqref{change} of \cref{Hajlasz} jointly with \cref{vanishing cardinality} and \eqref{L}, we obtain
\begin{align*}
\|Ph\|&\leq\frac{C}{KL}\sum_{n=1}^{N}\int_{\mathbb{R}}|h(y)|N_{f_n}(y,[0,1])dy
\leq\frac{C}{L}\sum_{n=1}^{N}\int_{f_n([0,1])}|h(y)|dy\\
&\leq{C\int_{[0,1]}|h(y)|dy}=C\|h\|.
\end{align*}
This implies \eqref{normP} and completes the proof.
\end{proof}

Our first result reads as follows.

\begin{theorem}\label{first theorem}
Assume {\rm \cref{general assumption}}, {\rm \cref{vanishing cardinality}}, and {\rm \cref{inverse of Luzin}}. If \eqref{C} holds with a real constant $C<1$, then the elementary solution $\varphi_0$ of equation \eqref{e} exists, and it is the unique solution of equation \eqref{e} in $L^1([0,1])$.
	
Moreover, for every $m\in\mathbb N$ we have
\begin{equation*}
\left\|\varphi_0-\sum_{k=0}^{m}P^k g\right\|\leq \frac{C^{m+1}}{1-C}\|g\|.
\end{equation*}
\end{theorem}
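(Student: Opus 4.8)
The plan is to exploit the operator-norm bound $\|P\|_*\leq C$ furnished by \cref{lemP} and then run the standard Neumann-series (contraction-mapping) argument; once that bound is in hand, the three assertions follow from elementary facts about geometric series in a Banach space. The genuine difficulty of the whole development is already absorbed into \cref{lemP}, whose proof secures measurability and the change-of-variables estimate yielding $\|P\|_*\leq C$; the present statement is a formal consequence of that inequality together with the completeness of $L^1([0,1])$.

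First I would record that, since $\|P\|_*\leq C$, submultiplicativity of the operator norm gives $\|P^kg\|\leq\|P\|_*^k\|g\|\leq C^k\|g\|$ for every $k\in\mathbb N$. Because $C<1$, the numerical series $\sum_{k=0}^{\infty}C^k\|g\|=\frac{\|g\|}{1-C}$ converges, so the series $\sum_{k=0}^{\infty}P^kg$ is absolutely convergent; as $L^1([0,1])$ is complete, it therefore converges in $L^1([0,1])$. Hence the elementary solution $\varphi_0$ exists, and \cref{solution of e} guarantees that $\varphi_0$ solves equation \eqref{e}.

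For uniqueness I would take two solutions $\varphi_1,\varphi_2\in L^1([0,1])$ of \eqref{e} and put $\psi=\varphi_1-\varphi_2$. Subtracting the two copies of \eqref{e} and invoking the linearity of $P$ yields $\psi=P\psi$, and an immediate induction gives $\psi=P^k\psi$ for every $k\in\mathbb N$. Consequently $\|\psi\|=\|P^k\psi\|\leq C^k\|\psi\|$, and letting $k\to\infty$ (using $C<1$) forces $\|\psi\|=0$, so $\varphi_1=\varphi_2$. Thus $\varphi_0$ is the unique solution in $L^1([0,1])$.

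Finally, the error estimate comes from the tail of the same geometric bound. Since $\varphi_0-\sum_{k=0}^{m}P^kg=\sum_{k=m+1}^{\infty}P^kg$, the triangle inequality together with $\|P^kg\|\leq C^k\|g\|$ gives
\begin{equation*}
\left\|\varphi_0-\sum_{k=0}^{m}P^kg\right\|\leq\sum_{k=m+1}^{\infty}\|P^kg\|\leq\sum_{k=m+1}^{\infty}C^k\|g\|=\frac{C^{m+1}}{1-C}\|g\|,
\end{equation*}
which is the claimed bound. I do not anticipate any real obstacle here: every step is a routine manipulation of the geometric estimate $\|P^kg\|\leq C^k\|g\|$, and the only substantive input, the inequality $\|P\|_*\leq C$, is supplied in advance by \cref{lemP}.
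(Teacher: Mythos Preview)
Your proof is correct and follows essentially the same route as the paper: both arguments rest entirely on the bound $\|P\|_*\leq C$ from \cref{lemP} and then run the standard geometric-series/contraction argument. The only cosmetic difference is packaging---the paper phrases existence and uniqueness together via the Banach fixed point theorem applied to $Tf=Pf+g$, whereas you treat existence (Neumann series plus \cref{solution of e}) and uniqueness (iterating $\psi=P\psi$) separately---but the underlying computations and the error estimate are identical.
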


\begin{proof}
We base our proof on the Banach fixed point theorem. To do so, we define an operator $T\colon L^1([0,1])\to L^1([0,1])$ by putting
\begin{equation*}
Tf=Pf+g;
\end{equation*}
this operator is well defined by \cref{lemP}.
	
Fix $h_1,h_2\in L^1([0,1])$. Applying \eqref{normP}, we obtain
\begin{align*}
\|Th_1-Th_2\|&=\|Ph_1-Ph_2\|\leq\|P\|_*\|h_1-h_2\|\leq  C\|h_1-h_2\|.
\end{align*}
Since $C<1$, we see that $T$ is a contraction.
	
By the Banach fixed point theorem, there exists exactly one $\varphi\in L^1([0,1])$ with $T\varphi=\varphi$, i.e.\ there exists exactly one $\varphi\in L^1([0,1])$ solving \eqref{e}.
Moreover, given $h\in L^1([0,1])$ we know that the unique solution $\varphi\in L^1([0,1])$ of \eqref{e} can be written as $\varphi=\lim_{m\to \infty} T^mh$, in particular
\begin{equation*}
\varphi=\lim_{m\to \infty} T^mg.
\end{equation*}
	
We claim that for every $m\in \mathbb{N}_0$ we have
\begin{equation}\label{T}
T^m g=\sum_{k=0}^{m}P^k{g}.
\end{equation}
The case $m=0$ is clear. So, fix $m\in\mathbb N$ and assume that \eqref{T} holds. Then
\begin{align*}
T^{m+1}g&=T\left(\sum_{k=0}^{m}P^kg\right)={P\left(\sum_{k=0}^{m}P^kg\right)+g}=\sum_{k=0}^{m+1}P^kg.
\end{align*}
In consequence, passing with $m$ to infinity in~\eqref{T}, we get  $\varphi=\sum_{k=0}^{\infty}P^k{g}=\varphi_0$.
	
For the speed of convergence note that \eqref{formula}~and~\eqref{normP} imply
\begin{align*}
\left\|\varphi_0-\sum_{k=0}^{m}P^k g\right\|&\leq\sum_{k=m+1}^{\infty}\|P^{k}g\|
\leq\sum_{k=m+1}^{\infty}\|P\|_*^k\|g\|\leq\sum_{k=m+1}^{\infty}C^k\|g\|=\frac{C^{m+1}}{1-C}\|g\|
\end{align*}
for every $m\in\mathbb N$.
\end{proof}

The following example says that the constant $C<1$ in condition \eqref{C} in \cref{first theorem}, is optimal, i.e.\ condition \eqref{C} cannot be replaced by the following weaker one
\begin{equation}\label{C1}
|g_n(x)|<\frac{1}{KL}|f_n'(x)|\quad\hbox{for all }n\in\{1,\ldots,N\}\hbox{ and almost all }x\in [0,1].
\end{equation}

\begin{example}\label{elementary solution6}
Fix $b\in\mathbb R$ and let $f_1(x)=f_2(x)=x$, $g_1(x)=g_2(x)=\frac{x}{2}$ and $g(x)=b$ for every $x\in[0,1]$. Then equation \eqref{e-example} takes the form
\begin{equation}\label{ex}
\varphi(x)=\frac{x}{2}\varphi(x)+\frac{x}{2}\varphi(x)+b.
\end{equation}
In the considered case $L=2$, {\rm \cref{general assumption}} and {\rm \cref{inverse of Luzin}} hold, and {\rm \cref{vanishing cardinality}} is satisfied with $K=1$. Moreover, \eqref{C} does not hold with any constant $C<1$, but \eqref{C1} is satisfied.
An easy calculation shows that the unique function satisfying \eqref{ex} everywhere on~$[0,1)$ is of the form
\begin{equation*}
\varphi(x)=\frac{b}{1-x},
\end{equation*}
but $\varphi\not\in L^1([0,1])$ provided that $b\neq 0$.
\end{example}

Note that equation \eqref{ex} can be written in the form
\begin{equation*}
\varphi(x)=x\varphi(x)+b.
\end{equation*}
Now $L=1$, {\rm \cref{vanishing cardinality}} is satisfied with $K=1$, {\rm \cref{general assumption}}, {\rm \cref{inverse of Luzin}}, and \eqref{C1} hold. However, \eqref{C} still does not hold with any constant $C<1$. This shows that the adopted assumptions are well-suited to our requirements.

The next observation together with a short comment after it sheds light on the meaning of condition \eqref{C1}.

\begin{remark}
Assume {\rm \cref{general assumption}}, {\rm \cref{vanishing cardinality}} and {\rm \cref{inverse of Luzin}}. If \eqref{C1} holds, then
\begin{equation}\label{inequality}
\sum_{n=1}^N\int_{[0,1]}|g_n(x)|dx<1.
\end{equation}
\end{remark}

\begin{proof}
Extend all the functions $f_1,\ldots,f_N$ and $g_1,\ldots,g_N$ to the whole real line~$\mathbb{R}$ by letting them being~$0$ outside of the interval $[0,1]$. Then applying \eqref{C1}, formula~\eqref{change} of \cref{Hajlasz} with $h=1$, \cref{vanishing cardinality}, \eqref{L} with the same  arguments as in the proof of \cref{lemP} we obtain
\begin{align*}
\sum_{n=1}^N\int_{[0,1]}|g_n(x)|\, dx&<\frac{1}{KL}\sum_{n=1}^N\int_{[0,1]}|f_n'(x)|\, dx=
\frac{1}{KL}\sum_{n=1}^{N}\int_{\mathbb{R}}N_{f_n}(y,[0,1])\, dy\\
&\leq\frac{1}{L}\sum_{n=1}^{N}\int_{f_n([0,1])}1\, dy\leq\int_{[0,1]}1\, dy=1,
\end{align*}
which completes the proof.
\end{proof}

Note that if the elementary solution $\varphi_0$ of equation \eqref{e} exists, then
$\varphi_0+c$ satisfies \eqref{e} for any real constant $c$ if and only if $P1=1$, i.e.\ if and only if
\begin{equation}\label{eqality}
\sum_{n=1}^Ng_n(x)=1\quad\hbox{for almost all }x\in[0,1].
\end{equation}
Clearly, condition \eqref{inequality} says something a little different from the negation of condition \eqref{eqality}.

Our second result gives conditions guaranteeing the existence of $\varphi_0$.

\begin{theorem}\label{second theorem}
Assume {\rm \cref{general assumption}}, {\rm \cref{vanishing cardinality}}, {\rm \cref{inverse of Luzin}}, and \eqref{C1}, and let
\begin{equation}\label{Pkneq0}
\|P^kg\|>0\quad\hbox{for every }k\in\mathbb N.
\end{equation}
If there exist $m\in\{1,\ldots,N\}$ and $C<1$ such that
\begin{equation}\label{C2}
|g_m(x)|\leq \frac{C}{KL}|f_m'(x)|\quad\hbox{for almost all }x\in [0,1]
\end{equation}
and
\begin{equation}\label{C3}
\inf_{k\in\mathbb N}\frac{1}{\|P^kg\|}\int_{f_m([0,1])}|P^kg(x)|dx>0,
\end{equation}
then the elementary solution of equation \eqref{e} exists.
\end{theorem}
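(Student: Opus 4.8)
The plan is to show that the series $\sum_{k=0}^{\infty}P^kg$ converges in $L^1([0,1])$ by proving it converges \emph{absolutely}, i.e.\ $\sum_{k=0}^{\infty}\|P^kg\|<\infty$; since $L^1([0,1])$ is complete, this yields the existence of the elementary solution $\varphi_0$ given by \eqref{formula}. We may assume $0\le C<1$, because \eqref{C2} with any $C<1$ also holds with $\max\{C,0\}<1$. The crux is a refined one-step estimate in which the $m$-th summand is treated with the strictly smaller constant $C$ from \eqref{C2}, while the remaining summands are handled by \eqref{C1}.

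Concretely, I would first prove that for every $h\in L^1([0,1])$,
\[
\|Ph\|\le\|h\|-\frac{1-C}{L}\int_{f_m([0,1])}|h(y)|\,dy.
\]
Starting from $\|Ph\|\le\sum_{n=1}^{N}\int_{[0,1]}|g_n(x)|\,|(h\circ f_n)(x)|\,dx$, bound each integrand pointwise by $\frac{c_n}{KL}|f_n'(x)|\,|(h\circ f_n)(x)|$ with $c_n=1$ for $n\neq m$ (using \eqref{C1}) and $c_m=C$ (using \eqref{C2}); on $\{f_n'=0\}$ both sides vanish, exactly as in the proof of \cref{lemP}. Then repeat verbatim the change-of-variables computation of that proof: extend the functions by $0$ outside $[0,1]$, apply formula \eqref{change} of \cref{Hajlasz}, and invoke \cref{vanishing cardinality} to bound the Banach indicatrix by $K$. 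Each term becomes $\frac{c_n}{L}\int_{f_n([0,1])}|h|$, and writing $\sum_{n}c_n\int_{f_n([0,1])}|h|=\sum_{n}\int_{f_n([0,1])}|h|-(1-C)\int_{f_m([0,1])}|h|$ and applying \eqref{L} to the first sum yields the displayed estimate.

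Finally I would iterate. Applying the estimate with $h=P^kg$ and using that \eqref{Pkneq0} makes \eqref{C3} meaningful, set $\delta=\inf_{k\in\mathbb N}\frac{1}{\|P^kg\|}\int_{f_m([0,1])}|P^kg|>0$, so that $\int_{f_m([0,1])}|P^kg|\ge\delta\|P^kg\|$ and hence
\[
\|P^{k+1}g\|\le\Big(1-\tfrac{(1-C)\delta}{L}\Big)\|P^kg\|=:q\,\|P^kg\|\qquad(k\in\mathbb N).
\]
Since $f_m([0,1])\subseteq[0,1]$ forces $\delta\le1$, while $L\ge1$ and $0\le C<1$, we get $q\in[0,1)$; in particular $q<1$. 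Thus $\|P^kg\|\le q^{k-1}\|Pg\|$ for every $k\ge1$, so $\sum_{k=0}^{\infty}\|P^kg\|\le\|g\|+\frac{\|Pg\|}{1-q}<\infty$, as required. I expect the only genuine obstacle to be the bookkeeping in the refined estimate—arranging the split so that the gain $(1-C)$ attaches precisely to the integral over $f_m([0,1])$—after which \eqref{C3} converts that gain into geometric decay and the argument closes mechanically.
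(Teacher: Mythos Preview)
Your proposal is correct and follows essentially the same approach as the paper: both split the $m$-th summand off with the smaller constant $C$ from \eqref{C2}, bound the remaining terms via \eqref{C1}, apply the change-of-variables estimate from \cref{lemP}/\cref{Hajlasz} together with \cref{vanishing cardinality} and \eqref{L}, and then use \eqref{C3} to convert the gain $(1-C)\int_{f_m([0,1])}|P^kg|$ into a geometric contraction factor (your $q$ is exactly the paper's $\alpha$). The only cosmetic difference is that you first phrase the one-step bound as a general inequality $\|Ph\|\le\|h\|-\frac{1-C}{L}\int_{f_m([0,1])}|h|$ valid for all $h\in L^1$ before specializing, whereas the paper works directly with $h=P^{k-1}g$.
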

	
\begin{proof}
By \cref{lemP} $P\colon L^1([0,1]\to L^1([0,1])$ is linear and continuous.

Since $\int_{f_m([0,1])}|P^kg(x)|dx\leq\|P^kg\|$ for every $k\in\mathbb N$, it follows that
\begin{equation*}
\alpha:=1-\frac{1-C}{L}\inf_{k\in\mathbb N}\frac{1}{\|P^kg\|}\int_{f_m([0,1])}|P^kg(x)|dx\in(0,1).
\end{equation*}

Using \eqref{C2}, \eqref{C1}, formula \eqref{change} of \cref{Hajlasz}, \cref{vanishing cardinality}, \eqref{L} and \eqref{C3} with the same arguments as in the second part of the proof of \cref{lemP}, we obtain
\begin{align*}
\|P^kg\|&\leq\sum_{n=1}^{N}\int_{[0,1]}|g_{n}(x)(P^{k-1}g\circ f_n)(x)|dx
\leq\frac{C}{KL}\int_{[0,1]}|f_m'(x)(P^{k-1}g\circ f_m)(x)|dx\\
&\hspace{3ex}+\frac{1}{KL}\sum_{n\neq m}\int_{[0,1]}|f_n'(x)(P^{k-1}g\circ f_n)(x)|dx\\
&=\frac{C}{KL}\int_{\mathbb{R}}|P^{k-1}g(y)|N_{f_m}(y,[0,1])dy
+\frac{1}{KL}\!\sum_{n\neq m}\int_{\mathbb{R}}|P^{k-1}g(y)|N_{f_n}(y,[0,1])dy\\
&\leq\frac{C}{L}\int_{f_m([0,1])}|P^{k-1}g(y)|dy
+\frac{1}{L}\sum_{n\neq m}\int_{f_n([0,1])}|P^{k-1}g(y)|dy\\
&=\frac{C-1}{L}\int_{f_m([0,1])}|P^{k-1}g(y)|dy
+\frac{1}{L}\sum_{n=1}^{N}\int_{f_n([0,1])}|P^{k-1}g(y)|dy\\
&\leq (\alpha-1)\|P^{k-1}g\|+\int_{[0,1]}|P^{k-1}g(y)|dy=\alpha\|P^{k-1}g\|
\end{align*}
for every $k\in\mathbb N$. Hence
\begin{equation*}
\|P^kg\|\leq\alpha^k\|g\|
\end{equation*}
for every $k\in\mathbb N,$ which ensures that the series $\sum_{k=0}^{\infty}P^kg$ converges in $L^1([0,1])$ and proves that the elementary solution of equation \eqref{e} exists.
\end{proof}

It is clear, in view of \cref{sufficient}, that assumption \eqref{Pkneq0} in \cref{second theorem} is not restrictive. It is also visible that condition \eqref{C3} holds trivially in the case where $f_m([0,1])=[0,1]$; in such a situation the infimum in \eqref{C3} is equal to~$1$. Combining this fact with \cref{elementary solution6}, we see that condition \eqref{C2} cannot be omitted in~\cref{second theorem}. Unfortunately, we do not know if condition \eqref{C3} in~\cref{second theorem} is necessary, so we formulate the following question.
	
\begin{problem}
Can we omit condition \eqref{C3} in \cref{second theorem}?
\end{problem}

We end this section with an application of \cref{second theorem}; compare it with \cref{elementary solution6}.

\begin{example}\label{elementary solution7}
Fix $a\in(2,+\infty)$, $b\in\mathbb R$ and let $f_1(x)=f_2(x)=x$, $g_1(x)=\frac{x}{2}$, $g_2(x)=\frac{x}{a}$ and $g(x)=b$ for every $x\in[0,1]$. Then equation \eqref{e-example} takes the form
\begin{equation}\label{ex1}
\varphi(x)=\frac{x}{2}\varphi(x)+\frac{x}{a}\varphi(x)+b.
\end{equation}
In the considered case $L=2$, {\rm \cref{general assumption}}, {\rm \cref{inverse of Luzin}} and \eqref{C1} hold, and {\rm \cref{vanishing cardinality}} is satisfied with $K=1$. Moreover,  \eqref{C2} and \eqref{C3} are satisfied with $C=\frac{2}{a}$ and $m=2$, since $f_2([0,1])=[0,1]$ (a priori, it can happen that $P^k b=0$ for some $k\in \mathbb{N}_0$; however, we see later that this can only occur if $b=0$; anyway the conclusion of \cref{second theorem} also holds in this case). Thus \cref{second theorem} yields that the elementary solution of equation \eqref{ex1} exists. In fact, equation \eqref{ex1} has no other solution in~$L^1([0,1])$, because an easy calculation shows that the unique function satisfying~\eqref{ex1} everywhere on $[0,1]$ is of the form
\begin{equation*}
\varphi(x)=\frac{2ab}{2a-(2+a)x}.
\end{equation*}
\end{example}


\section{Continuous dependence of elementary solutions}
In this section we will show that in general there is no continuous dependence of the elementary solution of equation \eqref{e} neither on the function $g$, nor on the functions $g_1,\ldots,g_N$, nor on the functions $f_1,\ldots,f_N$.

Fix a real number $\varepsilon\in\left(0,\frac{1}{2}\right)$.

The fact that there is no continuous dependence of the elementary solution of equation \eqref{e} on the function $g$ follows from the following example.

\begin{example}\label{elementary solution3}
By \cref{thmKN} there exists the elementary solution of equation~\eqref{e0} with $g=0$; it is the trivial function. But \cref{thmKN} also says that the elementary solution of equation \eqref{e0} with $g=\varepsilon$ does not exist, because in this case $P1=1$, and hence $\sum_{k=0}^mP^k\varepsilon=(m+1)\varepsilon$ for every $m\in\mathbb N$.	
\end{example}	

The next example shows that there is no continuous dependence of the elementary solution of equation \eqref{e} on the functions $g_1,\ldots,g_N$.

\begin{example}\label{elementary solution4}
Let $f_1(x)=\frac{x}{2}$, $f_2(x)=\frac{x+1}{2}$, $g_1(x)=g_2(x)=\frac{1-\varepsilon}{2}$ and $g(x)=1$ for every $x\in[0,1]$. Then equation \eqref{e-example} takes the form	
\begin{equation}\label{evarepsilon}
\varphi(x)=\frac{1-\varepsilon}{2}\varphi\left(\frac{x}{2}\right)+\frac{1-\varepsilon}{2}\varphi\left(\frac{x+1}{2}\right)+1.
\end{equation}
In the considered case, $L=1$, \cref{general assumption} and \cref{inverse of Luzin} are satisfied, and \cref{vanishing cardinality} holds with $K=1$. Moreover, \eqref{C} holds with $C=1-\varepsilon$. Hence by \cref{first theorem} the elementary solution of equation \eqref{evarepsilon} exists, it is the unique integrable solution of equation \eqref{evarepsilon} and it has the form
\begin{equation*}
\varphi_{0,\varepsilon}(x)=\sum_{k=0}^{\infty}(1-\varepsilon)^k=\frac{1}{\varepsilon}\quad\hbox{for every }x\in[0,1].
\end{equation*}
According to \cref{thmKN} and repeating the same arguments as in \cref{elementary solution3}, we see that the elementary solution of equation \eqref{evarepsilon} with $\varepsilon=0$ does not exist.
\end{example}

The last example shows that also there is no continuous dependence of the elementary solution of equation \eqref{e} on the functions $f_1,\ldots,f_N$.

\begin{example}\label{elementary solution5}
Let $f_1(x)=x^{1+\varepsilon}$, $f_2(x)=x^{1+2\varepsilon}$, $g_1(x)=g_2(x)=\frac{x}{2}$ and $g(x)=1$ for every $x\in[0,1]$. Then equation \eqref{e-example} takes the form
\begin{equation}\label{e5}
\varphi(x)=\frac{x}{2}\varphi\left(x^{1+\varepsilon}\right)+\frac{x}{2}\varphi\left(x^{1+2\varepsilon}\right)+1.
\end{equation}
In the considered case, $L=2$, \cref{general assumption} and \cref{inverse of Luzin} are satisfied, and \cref{vanishing cardinality} holds with $K=1$. Moreover, \eqref{C} holds with $C=\frac{1}{1+\varepsilon}$. Hence by \cref{first theorem} the elementary solution of equation \eqref{e5} exists and it is the unique integrable solution of equation~\eqref{e5}. Taking into account
\cref{elementary solution6}, we see that the elementary solution of equation \eqref{e5} with $\varepsilon=0$ does not exist.
\end{example}


\section{A special case of equation \texorpdfstring{\eqref{e-example}}{(1.6)}}
Fix constants $a,b\in\mathbb R$ and let $f_1(x)=\frac{x}{2}$, $f_2(x)=\frac{x+1}{2}$, $g_1(x)=g_2(x)=a$ and $g(x)=b$ for every $x\in[0,1]$. Then equation \eqref{e-example} takes the form
\begin{equation}\label{e2}
\varphi(x)=a\varphi\left(\frac{x}{2}\right)+a\varphi\left(\frac{x+1}{2}\right)+b.
\end{equation}
In the considered case, $L=1$, \cref{general assumption} and \cref{inverse of Luzin} are satisfied, and \cref{vanishing cardinality} holds with $K=1$. Moreover, \eqref{C} holds with $C=2|a|$. Then \cref{lemP} implies that formula \eqref{P} defines a linear and continuous operator $P\colon L^1([0,1])\to L^1([0,1])$ and a simple calculation shows that
\begin{equation*}
\sum_{k=0}^mP^kb=b\sum_{k=0}^m(2a)^k.
\end{equation*}
Hence the elementary solution $\varphi_0$ of equation \eqref{e2} exists if and only if $b=0$ or if $|a|<\frac{1}{2}$, and moreover, $\varphi_0$ is a constant function and
\begin{equation*}
\varphi_0=
\begin{cases*}
\frac{b}{1-2a},&in the case where $|a|<\frac{1}{2}$,\\
0,&in the case where $b=0$.
\end{cases*}
\end{equation*}

If $|a|<\frac{1}{2}$, then \cref{first theorem} implies that the elementary solution of equation~\eqref{e2} is the unique solution of equation~\eqref{e2} in $L^1([0,1])$.

If $a=\frac{1}{2}$, then the elementary solution of equation \eqref{e2} exists if and only if $b=0$. Moreover, by \cref{thmKN} equation \eqref{e2} has no solution in $L^1([0,1])$ in the case where $b\neq 0$, and the elementary solution of equation \eqref{e2} is the unique, up to an additive constant, solution of equation \eqref{e2} in $L^1([0,1])$ in the case where $b=0$; actually if $b=0$, then only constant functions are integrable solutions of equation \eqref{e2}.

If $a=-\frac{1}{2}$, then equation \eqref{e2} is a special case of equation \eqref{e1}. From \cref{elementary solution1} we see that equation \eqref{e2} has a solution in $L^1([0,1])$. However, the elementary solution of equation \eqref{e2} exists only in the case where $b=0$.

If $|a|>\frac{1}{2}$, then (as in the previous case) the elementary solution of equation \eqref{e2} exists only in the case where $b=0$. However, equation \eqref{e2} has a solution in $L^1([0,1])$ in the case where $b\neq 0$; the constant function $\frac{b}{1-2a}$ satisfies \eqref{e2}, and it coincides with the above formula of $\varphi_0$, although the elementary solution does not exist in this case.

We know for which parameters $a,b\in\mathbb R$ the elementary solution of equation~\eqref{e2} exists. However, we do not know what the set of all integrable solutions of equation~\eqref{e2} looks like in the case where $a\in(-\infty,-\frac{1}{2}]\cup(\frac{1}{2},+\infty)$ and $b\in\mathbb R$. Let us only note here that for certain parameters $a,b\in\mathbb R$ the set of integrable solutions of equation \eqref{e2} can be very large (see e.g.\ \cite{MZ2018}, where a large class of continuous solutions $\varphi\colon[0,1]\to\mathbb R$ of equation \eqref{e2} with $a=1$ has been found; actually there $b$ was equal to $-\varphi\left(\frac{1}{2}\right)$, however, fixing a function $\varphi\colon[0,1]\to\mathbb R$ satisfying equation \eqref{e2} with $a=1$ and $b=-\varphi(\frac{1}{2})$ for every $x\in[0,1]$, and then defining $\psi\colon[0,1]\to\mathbb R$ by putting $\psi(x)=\varphi(x)-\varphi(\frac{1}{2})-b$, we deduce that $\psi(x)=\psi\left(\frac{x}{2}\right)+\psi\left(\frac{x+1}{2}\right)+b$ for every $x\in[0,1]$.)
We conclude our discussion on integrable solutions of equation~\eqref{e2} with the following problem.

\begin{problem}
Determine all integrable solutions of equation \eqref{e2} in the case where $a\in(-\infty,-\frac{1}{2}]\cup(\frac{1}{2},+\infty)$ and $b\in\mathbb R$.
\end{problem}

\bigskip

{\bf Acknowledgments.} This research was supported by the University of Silesia Mathematics Department (Iterative Functional Equations and Real Analysis program).

\bibliographystyle{amsplain}
\bibliography{GeneralizedDerivative}

\end{document}